\def\tr{\mathop{\rm tr}\nolimits}
\def\tr{\mathop{\rm tr}\nolimits}
\def\vec{\mathop{\rm vec}\nolimits}
\newcommand{\bmu}{\boldsymbol{\mu}}
\newcommand{\0}{\boldsymbol{0}}
\newcommand{\bSigma}{\boldsymbol{\Sigma}}
\newcommand{\bB}{\boldsymbol{B}}
\newcommand{\bA}{\boldsymbol{A}}
\newcommand{\bd}{\boldsymbol{d}}
\newcommand{\be}{\boldsymbol{e}}
\newcommand{\bI}{\boldsymbol{I}}
\newcommand{\bp}{\boldsymbol{p}}
\newcommand{\bq}{\boldsymbol{q}}
\newcommand{\bu}{\boldsymbol{u}}
\newcommand{\bx}{\boldsymbol{x}}
\newcommand{\bX}{\boldsymbol{X}}
\newcommand{\by}{\boldsymbol{y}}
\newcommand{\calE}{\mathcal{E}}
\newcommand{\calL}{\mathcal{L}}
\newcommand{\calN}{\mathcal{N}}
\newcommand{\calR}{\mathcal{R}}
\newcommand{\calT}{\mathcal{T}}
\newcommand{\calW}{\mathcal{W}}
\newcommand{\A}{\boldsymbol{\mathcal{A}}}
\renewcommand{\S}{\boldsymbol{\mathcal{S}}}
\newcommand{\T}{\boldsymbol{\mathcal{T}}}
\newcommand{\X}{\boldsymbol{\mathcal{X}}}
\def\tr{\mathop{\rm tr}\nolimits}
\def\tr{\mathop{\rm tr}\nolimits}
\def\vec{\mathop{\rm vec}\nolimits}
\newtheorem{theorem}{Theorem}
\newtheorem{definition}{Definition}
\newtheorem{remark}{Remark}
\newenvironment{proof}[1][Proof]{\textbf{#1.} }{\ \rule{0.5em}{0.5em}}
\begin{document}

\title{\bf Some theoretical results on tensor elliptical distribution}

\bigskip

\author{{ M. Arashi}
\vspace{.5cm} \\\it
 \it Department of Statistics, School of Mathematical Sciences\\\vspace{.5cm} \it Shahrood University of Technology, Shahrood, Iran
 }

\date{}
\maketitle

\begin{quotation}
\noindent {\it Abstract:}
The multilinear normal distribution is a widely used tool in tensor analysis of magnetic resonance imaging (MRI). Diffusion tensor MRI provides a statistical estimate of a symmetric $2^{\textnormal{nd}}$-order diffusion tensor, for each voxel within an imaging volume.
In this article, tensor elliptical (TE) distribution is introduced as an extension to the multilinear normal (MLN) distribution. Some properties including the characteristic function and distribution of affine transformations are given. An integral representation connecting densities of TE and MLN distributions is exhibited that is used in deriving the expectation of any measurable function of a TE variate.
\par

\vspace{9pt} \noindent {\it Key words and phrases:} Characteristic generator; Inverse Laplace transform; Stochastic representation; Tensor; Vectorial operator.

\par

\vspace{9pt} \noindent {\it AMS Classification:} Primary: 62E15, 60E10 Secondary: 53A45, 15A69\par

\end{quotation}\par

\section{Introduction}
Nowadays, analysis of matrix-valued data sets is become quite common in medical sciences, since the collected data are of multiple-way (multiple-component) arrays. For example in medical imaging, it has become possible to collect magnetic resonance imaging (MRI) data that can be used to infer the apparent diffusivity of water in tissue in vivo. In this regard, there is a need to consider parallel extensions of bilinear forms\footnote{Bilinear form is a two-way (two-component) array, with each component represents a vector of observations}, namely tensor matrices.
Tensor matrices have been commonly used to approximate the diffusivity profile of images. This approximation yields a diffusion tensor magnetic resonance imaging (DT-MRI) data set. Processing of DT-MRI data sets has scientific significance in clinical sciences. Figure \ref{brain1} shows the tensor filed in a diffusion MRI image.
\begin{figure}
  \centering
   \includegraphics[scale=.6]{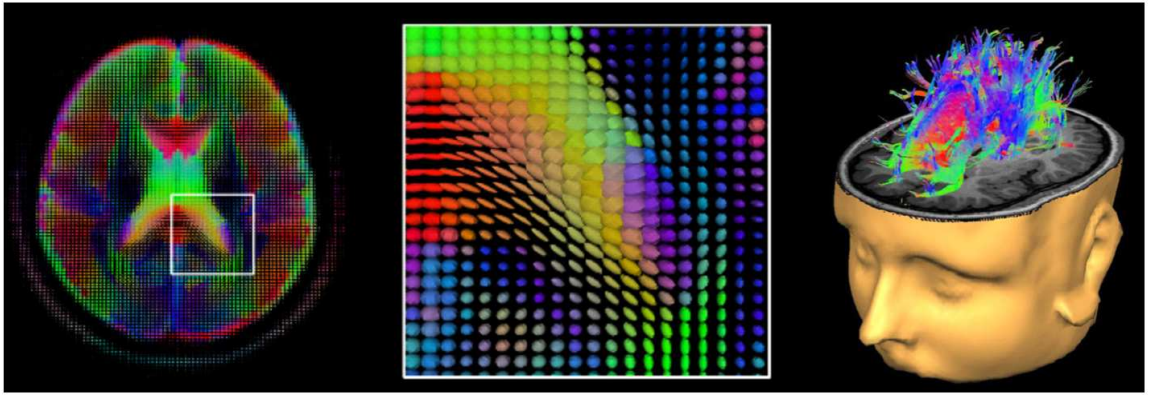}\\
  \caption{Visualization of tensor filed of a brain}\label{brain1}
\end{figure}

In image analysis, the characteristic or precision matrix of the underlying model for tensor observations and distribution of eigenvalues play deterministic roles. Hence, the underlying tensor distribution influences the respective inference. The use of tensor and associated distributional structure in Statistics dates back to McCullagh (1987). McCullagh (1984) had already introduced tensor notation in statistics with particular reference to the computation of polynomial cumulants. For a selective papers about tensors and applications in statistics, we refer to Sakata (2016).

In all pronounced studies in statistical tensor analysis, tensor normal (or multilinear normal) distribution is employed for the underlying distribution of observations. However, a slight change in the specification of the distribution, as pointed by Basser and Pajevic (2003), may play havoc on the resulting inferences. To broaden the scope of the distributions and achieve reasonable inferential conclusions, and in order to  accommodate the heavier tailed distributions in a reasonable way and produce robust inference procedures for applications, tensor t-distribution can be employed in related analysis. From a broader view point, one may define the class of tensor elliptical distributions which includes the latter distribution as special one. In this article, we define a new class of tensor elliptical distributions and study some of its statistical properties.

\setcounter{equation}{0}
\section{Preliminaries}
In this section we introduce related notation to our study and give some definitions. We adhere to the notation of Ohlson et al. (2013).

Let $\X$ be a tensor of order $k$ ($k^{\textnormal{th}}$-order tensor, in tensor parlance), with the dimension $\bp=(p_1,p_2,\ldots,p_k)$ in the $\bx=(x_1,x_2,\ldots,x_k)$ direction. Figure \ref{tensor} shows the special case when $k = 3$. Indeed $2^{\textnormal{nd}}$-order tensor is  matrix, $1^{\textnormal{st}}$-order tensor is vector, and $0^{\textnormal{th}}$-order tensor is scalar.
\begin{figure}
  \centering
   \includegraphics[scale=.5]{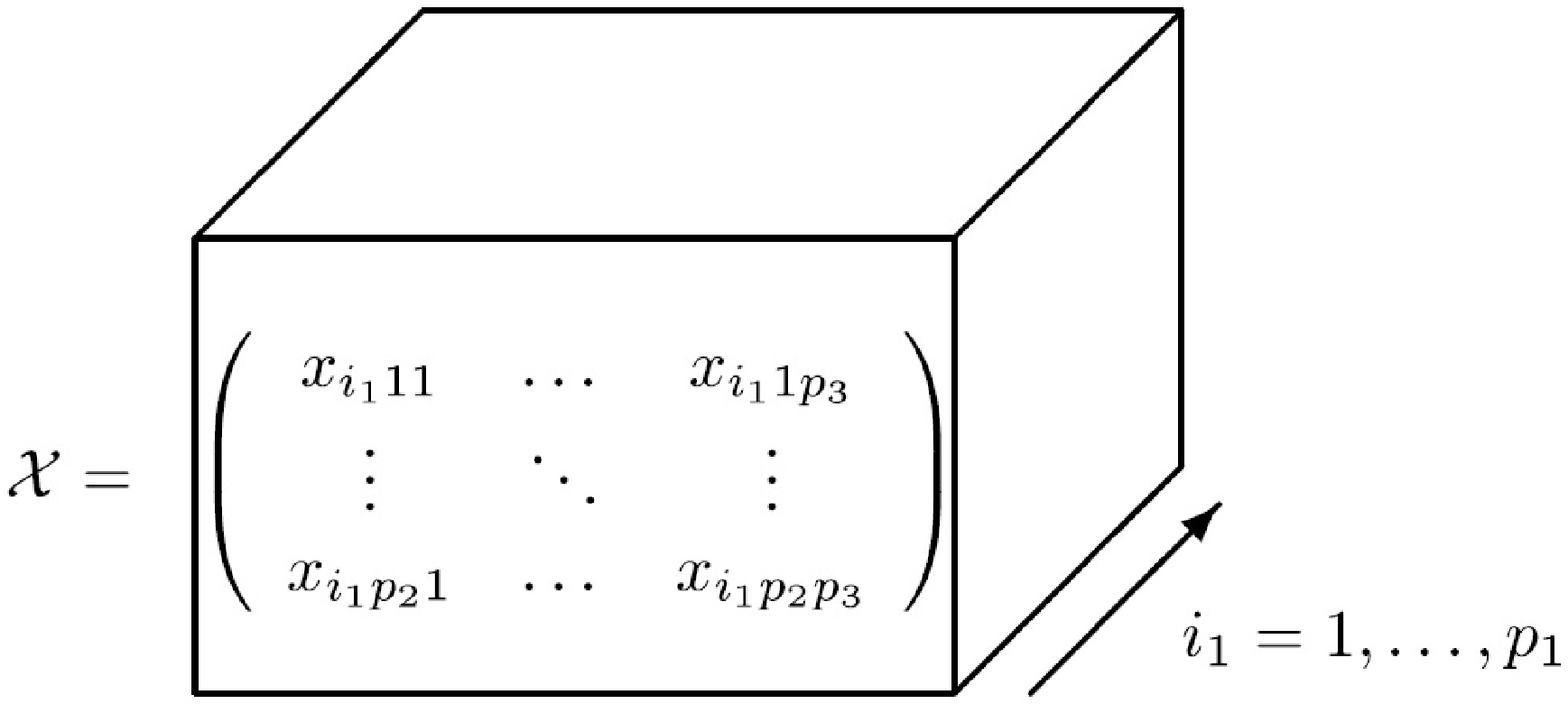}\\
  \caption{Visualization of a 3-dimensional data set as a $3^{\textnormal{rd}}$-order tensor.}\label{tensor}
\end{figure}

In connection with Figure \ref{tensor}, Figure \ref{heart} shows that the collected data can be interpreted as tensor, where the assessment of cardiac ventricular with helical structure is done by DT-MRI.
\begin{figure}
  \centering
   \includegraphics[scale=.2]{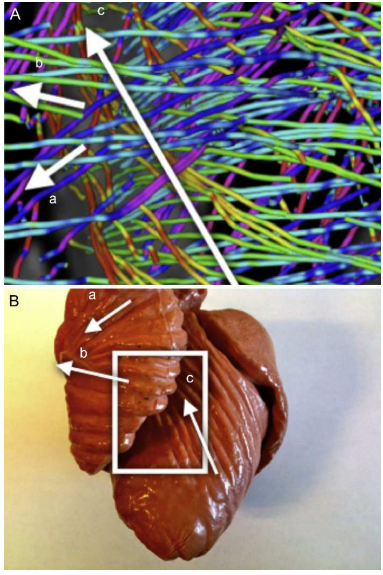}\\
  \caption{Helical structure of the cardiac ventricular anatomy}\label{heart}
\end{figure}

A vectorial representation of a tensor, make the related inference much simpler. Let $\vec\X$ denote the vectorization of tensor $\X=(x_{i_1i_2\cdots i_k})$, according to the definition of Kolda and Bader (2009) given by
\begin{eqnarray}\label{eq11}
\vec\X&=&\sum_{i_1=1}^{p_1}\cdots\sum_{i_k=1}^{p_k}x_{i_1i_2\cdots i_k}\be_{i_1}^1\otimes\cdots\otimes\be_{i_k}^k,\cr
&=&\sum_{I_{\bp}}x_{i_1i_2\cdots i_k}\be_{1:k}^{\bp},
\end{eqnarray}
where $\be_{i_k}^k$, $\be_{i_{k-1}}^{k-1}$, ..., $\be_{i_1}^1$ are the unit basis vectors of size $p_k$, $p_{k-1}$, ..., $p_1$, respectively, $\be_{1:k}^{\bp}=\be_{i_1}^1\otimes\cdots\otimes\be_{i_k}^k$, where $\otimes$ denotes the Kronecker product, $I_{\bp}$ is the index set defined as $I_{\bp}=\{i_1,\ldots,i_k:1\leq i_j\leq p_j, 1\leq j\leq k\}$. In Ohlson et al. (2012), the authors concentrated on the estimation of a Kronecker structured covariance matrix of order three ($k=3$), the so called double separable covariance matrix, generalizing the work of Srivastava et al. (2008), for multilinear normal (MLN) distributions.

Let $\calT^{\bp}$ denote the space of all vectors $\bx=\vec\X$, where $\X$ is a tensor of order $k$, i.e., $\calT^{\bp}=\{\bx:\bx=\sum_{I_{\bp}}x_{i_1i_2\cdots i_k}\be_{1:k}^{\bp}\}$. Note that this tensor space is described using vectors. However, we can define tensor spaces using matrices. This is given in the following definition.
\begin{definition}
Let
\begin{enumerate}
\item[(i)] $\T^{\bp\bq}=\left\{\bX:\bX=\sum_{I_{\bp}\cup I_{\bq}} x_{i_1,\cdots,i_k,j_1,\cdots,j_l}\be_{1:k}^{\bp}(\bd_{1:l}^{\bq})'\right\}$, $I_{\bq}=\left\{j_1,\ldots,j_l:1\leq j_i\leq p_i,\;1\leq i\leq l\right\}$
\item[(ii)] $\T^{\bp\bq}_{\otimes}=\left\{\bX\in\T^{\bp\bq}:\bX=\bX_1\otimes\ldots\otimes\bX_k,\bX_i:p_i\times q_i\right\}$
\item[(iii)] $\T^{\bp}_{\otimes}=\left\{\bX\in\T^{\bp\bp}_{\otimes}:\bX=\bX_1\otimes\ldots\otimes\bX_k,\bX_i:p_i\times p_i\right\}$
\end{enumerate}
\end{definition}

\begin{theorem}\label{MLN} (Ohlson et al., 2013) A tensor $\X$ is MLN of order $k$, denoted by $\X\sim\calN_{\bp}(\bmu,\bSigma)$ if $\bx=\bmu+\bSigma^{\frac{1}{2}}\bu$, where $\bx,\bmu\in\T^{\bp}$, $\bSigma\in\T_{\otimes}^{\bp}$, $\bp=(p_1,\ldots,p_k)$, and the elements of $\bu\in\T^{\bp}$ are independent standard normally distributed.
\end{theorem}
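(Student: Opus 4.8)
The statement is essentially the defining stochastic representation of the multilinear normal law, so the content to be checked is that this representation is well posed on $\calT^{\bp}$ and that it collapses to the familiar separable density and characteristic function; these are exactly the facts the elliptical extension later in the paper will lean on. The only structural tool needed is the mixed-product rule for Kronecker products, $(\bA\otimes\bB)(\bC\otimes\bD)=(\bA\bC)\otimes(\bB\bD)$.

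First I would make sense of $\bSigma^{1/2}$. Since $\bSigma\in\T_{\otimes}^{\bp}$ we may write $\bSigma=\bSigma_1\otimes\cdots\otimes\bSigma_k$ with each $\bSigma_i$ a symmetric positive-definite $p_i\times p_i$ matrix, and set $\bSigma^{1/2}:=\bSigma_1^{1/2}\otimes\cdots\otimes\bSigma_k^{1/2}$ using the ordinary positive-definite roots. The mixed-product rule gives $\bSigma^{1/2}\bSigma^{1/2}=(\bSigma_1^{1/2}\bSigma_1^{1/2})\otimes\cdots\otimes(\bSigma_k^{1/2}\bSigma_k^{1/2})=\bSigma$, so $\bSigma^{1/2}$ is a genuine square root and, being itself a Kronecker product of $p_i\times p_i$ matrices, lies in $\T_{\otimes}^{\bp}$; hence $\bSigma^{1/2}\bu\in\calT^{\bp}$ and $\bx=\bmu+\bSigma^{1/2}\bu$ is a well-defined element of $\calT^{\bp}$. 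The factorisation is unique only up to the rescaling $\bSigma_i\mapsto c_i\bSigma_i$ with $\prod_i c_i=1$, but this leaves $\bSigma$, and therefore the law of $\bx$, unchanged.

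Next, because the entries of $\bu$ are i.i.d. $N(0,1)$ we have $\bu\sim N_{p}(\0,\bI_{p})$ with $p:=p_1\cdots p_k$, so the affine-image property of the multivariate Gaussian yields $\bx\sim N_{p}(\bmu,\bSigma)$ at the level of vectorised tensors. Unfolding this to tensor form with the two Kronecker identities $|\bSigma_1\otimes\cdots\otimes\bSigma_k|=\prod_{i=1}^{k}|\bSigma_i|^{p/p_i}$ and $(\bSigma_1\otimes\cdots\otimes\bSigma_k)^{-1}=\bSigma_1^{-1}\otimes\cdots\otimes\bSigma_k^{-1}$ gives the separable density
\[
f_{\X}(\bx)=(2\pi)^{-p/2}\Big(\prod_{i=1}^{k}|\bSigma_i|^{-p/(2p_i)}\Big)\exp\!\Big(-\tfrac12(\bx-\bmu)'\big(\bSigma_1^{-1}\otimes\cdots\otimes\bSigma_k^{-1}\big)(\bx-\bmu)\Big),
\]
while $E[e^{i\bt'\bu}]=e^{-\|\bt\|^2/2}$ together with the same substitution gives the characteristic function $\varphi_{\X}(\bt)=\exp\!\big(i\bt'\bmu-\tfrac12\bt'\bSigma\bt\big)$ for $\bt\in\calT^{\bp}$.

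I do not expect a real obstacle: the argument is bookkeeping with Kronecker products sitting on top of the classical multivariate-normal facts. The one point that needs care is keeping the vectorisation convention of \eqref{eq11} aligned with the ordering of the factors in $\bSigma=\bSigma_1\otimes\cdots\otimes\bSigma_k$, so that $\bSigma^{1/2}$ really does act by scaling mode $i$ by $\bSigma_i^{1/2}$; it is this matching of conventions that makes the density collapse to the displayed separable form rather than to a mode-permuted version of it.
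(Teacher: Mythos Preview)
The paper does not supply a proof for this statement: Theorem~\ref{MLN} is quoted verbatim from Ohlson et al.\ (2013) as the \emph{definition} of the multilinear normal law via its stochastic representation, and is used thereafter as a black box. So there is no ``paper's own proof'' to compare against.

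Your proposal is correct and does more than the paper asks: you verify that $\bSigma^{1/2}$ can be chosen in $\T_{\otimes}^{\bp}$ via the mixed-product rule, that the representation is well posed on $\calT^{\bp}$, and you then derive the separable density and characteristic function. These last two facts are exactly what the paper later invokes (the density appears as equation~\eqref{MLN denisty} in Remark~1, and the characteristic function is the $\psi(x)=e^{-x/2}$ case of Theorem~3), so your write-up effectively supplies the background the paper takes for granted. The only caveat you yourself flag---matching the vectorisation convention in \eqref{eq11} with the Kronecker ordering $\bSigma_1\otimes\cdots\otimes\bSigma_k$---is real but purely notational, and the paper simply assumes it is consistent throughout.
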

Note that $\bSigma\in\T^{\bp}_{\otimes}$ can be written as Kronecker product $\bSigma=\bSigma_1\otimes\ldots\otimes\bSigma_k$

Indeed, Theorem \ref{MLN} configures the MLN distribution using the stochastic representation of vector $\bx\in\T^{\bp}$. This methodology can be mimicked to extend the above result for elliptical models. Before revealing the main result of this paper, we need to consider the definition of matrix elliptical distributions.

\setcounter{equation}{0}
\section{Tensor Elliptical Distributions}
Let $\bu^{(p^*)}$, $p^*=\prod_{i=1}^k p_i$, denote a random vector distributed uniformly on the unit sphere surface in $\mathbb{R}^{p^*}$, with characteristic function (cf) $\Omega_{p^*}(\cdot)$.
Hereafter, using Theorem 2.2 of Fang et al. (1990), we propose a definition for tensor elliptical (TE) distribution. The methodology behind our definition of TE distribution comes from two facts: (1) a random matrix $\bX$ has matrix elliptical distribution if and only if $\vec\bX$ has vector-variate elliptical distribution which will be used for tensor (see Gupta et al., 2013) (2) the difference between vector-variate elliptical and TE lies in the structure of the parameter space generated by $\bmu$ and $\bSigma$.
\begin{definition}\label{definition of TE}
A random tensor $\X$ is TE of order $k$, denoted by $\X\sim\calE_{\bp}(\bmu,\bSigma,\psi)$, if
\begin{equation}
\bx=\vec(\X)=\bmu+\calR\bSigma^{\frac12}\bu^{(p^*)},
\end{equation}
where $\bx,\bmu\in\T^{\bp}$, $\bSigma^{\frac12}\in\T_{\otimes}^{\bp}$ is any square root, $\bp=(p_1,\ldots,p_k)$, $\calR\geq0$ is independent of $\bu^{(p^*)}$, and $\calR\sim F$, for some cumulative distribution function (cdf) $F(\cdot)$ over $[0,\infty)$, is related to $\psi$ by the following relation
\begin{equation}
\psi(x)=\int_{\mathbb{R}^+}\Omega_{p^*}(xr^2)\textnormal{d}F(r).
\end{equation}
\end{definition}

The question arises whether the parameters in Definition \ref{definition of TE} are uniquely defined. The answer is no. To see this, assume
that $a_i$, $i=1,\ldots,k$ are positive constants such that $a^*=\prod_{j=1}^k a_j$, $\bSigma_j^*=a_j\bSigma_j$, $j=1,\ldots,k$ and $\psi^*(x)=\psi\left(\frac{1}{p^*}x\right)$. Then $\calE_{\bp}(\bmu,\bSigma,\psi)$ and $\calE_{\bp}(\bmu,\bSigma^*,\psi^*)$, where $\bSigma^*=\bSigma_1^*\otimes\ldots\otimes\bSigma_k^*$, define the same tensor elliptical distribution.

Using vector representation, $\bx\in\T^{\bp}$, we can conveniently write the probability distribution function (pdf) of
a TE extending the pdf of MLN distribution. The following result gives the pdf of a random tensor elliptical if it possesses a density, as an extension to Ohlson et al. (2013).
\begin{theorem}\label{theorem of TE pdf} Under the assumptions of Definition \ref{definition of TE}, the pdf of the TE distribution is given by
\begin{equation*}
f_{\X}(\bx)=|\bSigma|^{-\frac12}g\left[(\bx-\bmu)'\bSigma^{-1}(\bx-\bmu)\right],
\end{equation*}
where $g(\cdot)$ is a non-negative function (density generator, say) satisfying
\begin{equation*}
\int_{\mathbb{R}^+}y^{\frac12 p^*-1}g(y)\textnormal{d}y<\infty.
\end{equation*}
We designate $\X\sim\calE_{\bp}(\bmu,\bSigma,g)$.
\end{theorem}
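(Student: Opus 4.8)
The plan is to reduce the statement to the classical density formula for a vector-variate elliptically contoured law, since Definition~\ref{definition of TE} writes $\bx=\vec(\X)$ in exactly the stochastic form $\bmu+\calR\bSigma^{\frac12}\bu^{(p^*)}$ of such a law on $\mathbb{R}^{p^*}$; the only extra bookkeeping is that the scale matrix $\bSigma$ carries Kronecker structure, and I would show this affects nothing beyond a determinant identity.

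First I would analyse the spherical building block $\by:=\calR\,\bu^{(p^*)}$. Because $\bu^{(p^*)}$ is uniform on the unit sphere $S^{p^*-1}$ and $\calR\ge 0$ is independent of it, the distribution of $\by$ is orthogonally invariant; hence $\by$ admits a Lebesgue density on $\mathbb{R}^{p^*}$ if and only if $\calR$ admits a density $f_{\calR}$, and a passage to polar coordinates (using that the surface measure of $S^{p^*-1}$ equals $2\pi^{p^*/2}/\Gamma(p^*/2)$) produces a non-negative function $g$ with
\[
f_{\calR}(r)=\frac{2\pi^{p^*/2}}{\Gamma(p^*/2)}\,r^{p^*-1}g(r^2),\qquad r>0,
\]
and with the density of $\by$ at a point $\bz$ equal to $g(\bz'\bz)$. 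Imposing that this density integrate to $1$ over $\mathbb{R}^{p^*}$ and reverting to polar coordinates gives
\[
\int_{\mathbb{R}^+}y^{\frac12 p^*-1}g(y)\,\mathrm{d}y=\frac{\Gamma(p^*/2)}{\pi^{p^*/2}}<\infty,
\]
which is precisely the claimed integrability condition, and conversely any $g\ge 0$ satisfying it is a legitimate density generator.

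Next I would transport $\by$ by the affine map $\by\mapsto \bx=\bmu+\bSigma^{\frac12}\by$. Taking $\bSigma^{\frac12}=\bSigma_1^{\frac12}\otimes\cdots\otimes\bSigma_k^{\frac12}$ to be any Kronecker-factorwise square root, so that $\bSigma^{\frac12}\big(\bSigma^{\frac12}\big)'=\bSigma$, the change-of-variables formula introduces the Jacobian factor $|\bSigma^{\frac12}|^{-1}=|\bSigma|^{-\frac12}$, the last equality together with the value $|\bSigma|=\prod_{i=1}^k|\bSigma_i|^{p^*/p_i}$ following from multiplicativity of the determinant over Kronecker products; moreover $\by'\by=(\bx-\bmu)'\big(\bSigma^{\frac12}(\bSigma^{\frac12})'\big)^{-1}(\bx-\bmu)=(\bx-\bmu)'\bSigma^{-1}(\bx-\bmu)$. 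Substituting yields
\[
f_{\X}(\bx)=|\bSigma|^{-\frac12}\,g\!\left[(\bx-\bmu)'\bSigma^{-1}(\bx-\bmu)\right],
\]
as desired.

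I expect the main obstacle to be the rigorous treatment of the ``density exists'' equivalence and the polar-coordinate disintegration when $F$ is not assumed absolutely continuous: one must argue carefully that $\X$ (equivalently $\by$) fails to have a density unless $\calR$ does, and handle any atomic or singular part of $F$ correctly. Once that is settled the remainder is routine — the affine change of variables, the Kronecker determinant identity, and matching the normalizing constant with the integrability bound — and in fact this portion can simply be quoted from the vector-elliptical density result of Fang et al. (1990, Theorem 2.2) and Gupta et al. (2013), after observing that the Kronecker form of $\bSigma$ enters only through $|\bSigma|$.
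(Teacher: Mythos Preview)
The paper does not give a proof of this theorem at all; it states the density formula as an immediate extension of the vector-elliptical density (implicitly deferring to Fang et al., 1990, and Gupta et al., 2013) and moves on. Your proposal supplies exactly the standard derivation the paper omits---reduce to the stochastic representation $\bmu+\calR\bSigma^{1/2}\bu^{(p^*)}$, disintegrate in polar coordinates to produce $g$, then apply the affine change of variables---so it is correct and fully in line with the paper's viewpoint that only the Kronecker determinant identity distinguishes the tensor case from the classical vector case.
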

In a similar fashion, we have the following result.
\begin{theorem} Let $\X\sim\calE_{\bp}(\bmu,\bSigma,\psi)$. Then, its characteristic function has form
\begin{equation}
\phi_{\X}(\S)=e^{i\S'\bmu}\psi(\S'\bSigma\S),\quad \S\in\T^{\bp}.
\end{equation}
\end{theorem}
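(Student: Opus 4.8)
The plan is to reduce the tensor statement to the known vector-variate elliptical case and then exploit the product structure of $\bSigma$. First I would invoke the defining stochastic representation $\bx=\vec(\X)=\bmu+\calR\bSigma^{1/2}\bu^{(p^*)}$ and compute the characteristic function directly as
\begin{equation*}
\phi_{\X}(\S)=E\left[e^{i\S'\bx}\right]=e^{i\S'\bmu}\,E\left[e^{i\calR\,\S'\bSigma^{1/2}\bu^{(p^*)}}\right],\qquad \S\in\T^{\bp},
\end{equation*}
using independence of $\calR$ and $\bu^{(p^*)}$ to pull out the nonrandom phase factor $e^{i\S'\bmu}$.

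Next I would condition on $\calR=r$ and use the fact that for the vector $\bu^{(p^*)}$ uniform on the unit sphere in $\mathbb{R}^{p^*}$ one has $E[e^{i\bt'\bu^{(p^*)}}]=\Omega_{p^*}(\bt'\bt)$ for any fixed $\bt\in\mathbb{R}^{p^*}$ (this is precisely the characterization underlying Theorem 2.2 of Fang et al. (1990) cited before Definition \ref{definition of TE}). Applying this with $\bt=r\,(\bSigma^{1/2})'\S$ gives
\begin{equation*}
E\left[e^{i\calR\,\S'\bSigma^{1/2}\bu^{(p^*)}}\,\middle|\,\calR=r\right]=\Omega_{p^*}\!\left(r^2\,\S'\bSigma^{1/2}(\bSigma^{1/2})'\S\right)=\Omega_{p^*}\!\left(r^2\,\S'\bSigma\S\right),
\end{equation*}
where the last equality uses that $\bSigma^{1/2}$ is a square root of $\bSigma$, so $\bSigma^{1/2}(\bSigma^{1/2})'=\bSigma$; note the scalar $\S'\bSigma\S$ is well defined and the answer does not depend on which square root is chosen. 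Integrating over the law $F$ of $\calR$ and applying the relation $\psi(x)=\int_{\mathbb{R}^+}\Omega_{p^*}(xr^2)\,\textnormal{d}F(r)$ from Definition \ref{definition of TE} with $x=\S'\bSigma\S$ yields $\phi_{\X}(\S)=e^{i\S'\bmu}\psi(\S'\bSigma\S)$.

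The one point that needs a word of care — and what I expect to be the main (minor) obstacle — is checking that $\S'\bSigma\S$ is the correct argument and that the representation respects the tensor/Kronecker structure: since $\S\in\T^{\bp}$ and $\bSigma=\bSigma_1\otimes\cdots\otimes\bSigma_k\in\T_{\otimes}^{\bp}$, one should verify that $\bSigma^{1/2}$ can be taken as $\bSigma_1^{1/2}\otimes\cdots\otimes\bSigma_k^{1/2}$ and that the quadratic form $\S'\bSigma\S$ coincides with the quadratic form appearing in the density generator of Theorem \ref{theorem of TE pdf}, so that the cf genuinely depends on $(\bmu,\bSigma)$ only through the stated expression. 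This is essentially the observation, recorded just before Definition \ref{definition of TE}, that a tensor is elliptical precisely when its vectorization is vector-variate elliptical; once that identification is in place, the computation above is the standard one for vector elliptical characteristic functions and no further difficulty arises. \blackbox
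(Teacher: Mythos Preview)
Your argument is correct: starting from the stochastic representation $\bx=\bmu+\calR\bSigma^{1/2}\bu^{(p^*)}$, conditioning on $\calR$, using $E[e^{i\bt'\bu^{(p^*)}}]=\Omega_{p^*}(\bt'\bt)$, and then integrating against $F$ to recover $\psi$ is exactly the standard derivation of the elliptical characteristic function, and all the tensor-specific points you flag (taking $\bSigma^{1/2}=\bSigma_1^{1/2}\otimes\cdots\otimes\bSigma_k^{1/2}$, independence of the chosen square root, $\S\in\T^{\bp}$) are handled properly.

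As for comparison with the paper: the paper does not actually give a proof of this theorem. It is stated immediately after Theorem \ref{theorem of TE pdf} with only the phrase ``In a similar fashion, we have the following result,'' and no argument is written out. Your derivation is precisely the computation the authors are gesturing at --- reducing to the vector-variate elliptical case via the vectorization (the point (1) they record just before Definition \ref{definition of TE}) and then invoking Theorem 2.2 of Fang et al.\ (1990) --- so there is nothing further to compare.
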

\begin{remark}
Since
\begin{eqnarray*}
|\bSigma|^{-\frac12}&=&|\bSigma_1\otimes\bSigma_2\otimes\ldots\otimes\bSigma_k|^{-\frac12}\cr
&=&\left(|\bSigma_1|^{\frac{p^*}{p_1}}\right)^{-\frac12}\times\left(|\bSigma_2|^{\frac{p^*}{p_2}}\right)^{-\frac12}\times\ldots\times\left(|\bSigma_k|^{\frac{p^*}{p_k}}\right)^{-\frac12}\cr
&=&\prod_{i=1}^k |\bSigma_i|^{-\frac{p^*}{2p_i}}
\end{eqnarray*}
taking $g(y)=(2\pi)^{-\frac{1}{2}p^*}\exp\left(-\frac12 y\right)$ in Definition \ref{definition of TE},
gives the pdf of MLN distribution (as given in Theorem 1 of Ohlson et al., 2013) as
\begin{equation}\label{MLN denisty}
f_{\X}(\bx)=(2\pi)^{-\frac{1}{2}p^*}\left(\prod_{i=1}^k|\bSigma_i|^{-\frac{p^*}{2p_i}}\right)\exp\left[-\frac12(\bx-\bmu)'\bSigma^{-1}(\bx-\bmu)\right],
\end{equation}
where $\bSigma$ is positive definite, $\bx,\bmu\in\T^{\bp}$, $\bSigma\in\T^{\bp}_{\otimes}$, and $p^*=\prod_{i=1}^k p_i$.
\end{remark}
The following result gives the distribution of affine transformations for TE variates.
\begin{theorem} Let $\X\sim\calE_{\bp}(\bmu,\bSigma,\psi)$, with $\vec\X\in\T^{\bp}$, $\bA\in\T^{\bq\bp}$ is nonsingular, and $\bB\in\T^{\bq}$. Then, $\bA\X+\bB\sim\calE_{\bq}(\bA\bmu+\bB,\bA\bSigma\bA')$, where $\bA\bmu+\bB\in\T^{\bq}$ and $\bA\bSigma\bA'\in\T_{\otimes}^{\bq}$.
\end{theorem}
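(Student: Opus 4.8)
The plan is to read the result off the characteristic function formula for the TE distribution proved above, invoke uniqueness of characteristic functions, and then treat the Kronecker structure of the transformed parameters as a separate bookkeeping step. Put $\bx=\vec(\X)$ and $\bb=\vec(\bB)$ and interpret $\bA\X+\bB$ through its vectorial representation $\bA\bx+\bb\in\T^{\bq}$; note $\bA'\S\in\T^{\bp}$ whenever $\S\in\T^{\bq}$. Then for $\S\in\T^{\bq}$,
\[
\phi_{\bA\X+\bB}(\S)=E\!\left[e^{i\S'(\bA\bx+\bb)}\right]=e^{i\S'\bb}\,E\!\left[e^{i(\bA'\S)'\bx}\right]=e^{i\S'\bb}\,\phi_{\X}(\bA'\S),
\]
and substituting $\phi_{\X}(\bt)=e^{i\bt'\bmu}\psi(\bt'\bSigma\bt)$ with $\bt=\bA'\S$ gives, after collecting exponents,
\[
\phi_{\bA\X+\bB}(\S)=e^{i\S'(\bA\bmu+\bb)}\,\psi\!\left(\S'(\bA\bSigma\bA')\S\right),
\]
which is exactly the characteristic function of the $\calE_{\bq}(\bA\bmu+\bB,\bA\bSigma\bA',\psi)$ law (the generator is unchanged). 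Since a distribution on $\T^{\bq}$ is determined by its characteristic function, this identifies $\bA\X+\bB$.

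What remains — and what I expect to be the only real obstacle — is checking that the new parameters lie in the asserted spaces: $\bA\bmu+\bB\in\T^{\bq}$ (immediate from $\bA\in\T^{\bq\bp}$, $\bmu\in\T^{\bp}$, $\bB\in\T^{\bq}$) and, more substantively, $\bA\bSigma\bA'\in\T^{\bq}_{\otimes}$. For the latter one must use that $\bA$ respects the separable block structure of the tensor spaces, i.e. effectively $\bA\in\T^{\bq\bp}_{\otimes}$, say $\bA=\bA_1\otimes\cdots\otimes\bA_k$; together with $\bSigma=\bSigma_1\otimes\cdots\otimes\bSigma_k$ the mixed-product rule yields
\[
\bA\bSigma\bA'=(\bA_1\bSigma_1\bA_1')\otimes\cdots\otimes(\bA_k\bSigma_k\bA_k'),
\]
a Kronecker product of $q_i\times q_i$ positive definite matrices, hence a member of $\T^{\bq}_{\otimes}$. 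Nonsingularity of $\bA$ enters precisely here: it forces each $\bA_i$ to be a nonsingular square block (so $q_i=p_i$) and keeps $\bA\bSigma\bA'$ positive definite, so the transformed law stays inside the density-bearing family of Theorem \ref{theorem of TE pdf}. All the work before this point is the classical vector-variate elliptical manipulation; the genuinely tensorial content is just the verification that separability is preserved.

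An equivalent and perhaps more natural route, mirroring the stochastic-representation style of Definition \ref{definition of TE} and Theorem \ref{MLN}, is to observe that $\bx=\bmu+\calR\bSigma^{\frac{1}{2}}\bu^{(p^*)}$ gives $\bA\bx+\bb=(\bA\bmu+\bb)+\calR\,(\bA\bSigma^{\frac{1}{2}})\,\bu^{(p^*)}$, where $\bA\bSigma^{\frac{1}{2}}\in\T^{\bq}_{\otimes}$ satisfies $(\bA\bSigma^{\frac12})(\bA\bSigma^{\frac12})'=\bA\bSigma\bA'$ and the generating radius $\calR$ is untouched; this is the representation of a $\calE_{\bq}(\bA\bmu+\bB,\bA\bSigma\bA',\psi)$ variate. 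The one point needing a line of care is that the law of $\calR\bM\bu^{(p^*)}$ depends on the square root $\bM$ only through $\bM\bM'$ — visible once more from its characteristic function $\psi(\S'\bM\bM'\S)$ — and that nonsingularity forces $p^*=q^*$, so the sphere $\bu^{(p^*)}$ already carries the right dimension.
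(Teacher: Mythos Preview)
Your proposal is correct. Your primary route via the characteristic function is a genuinely different argument from the paper's, which simply writes down the stochastic representation $\by=(\bA\bmu+\bB)+\calR(\bA\bSigma^{\frac12})\bu^{(p^*)}$ and declares the result immediate --- exactly the approach you sketch as your ``equivalent and perhaps more natural route'' at the end. The characteristic-function argument buys you a clean identification of the law without worrying about which square root is in play, while the stochastic-representation route stays closer to Definition~\ref{definition of TE} and is shorter. You also go beyond the paper in flagging that $\bA\bSigma\bA'\in\T^{\bq}_{\otimes}$ really requires $\bA$ itself to be separable ($\bA\in\T^{\bq\bp}_{\otimes}$) and that nonsingularity forces matching block dimensions; the paper asserts the membership without comment, so your treatment of the Kronecker bookkeeping is more careful than the original.
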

\begin{proof} Let $\by=\bA\bx+\bB$, where $\bx=\vec(\X)$. From the stochastic representation in Definition \ref{definition of TE}, the proof directly follows from
$\by=(\bA\bmu+\bB)+\calR(\bA\bSigma^{\frac12})\bu^{(p^*)}$.
\end{proof}

The following result is a direct consequent of Theorem 2.16 of Gupta et al. (2013) for tensor elliptical distributions.
\begin{theorem} Under the assumptions of Definition \ref{definition of TE}, the pdf of $\calR$ has from
\begin{equation*}
h_{\calR}(r)=\frac{2\pi^{\frac12p^*}}{\Gamma\left(\frac12p^*\right)}r^{p^*-1}g\left(r^2\right), \quad r\geq0.
\end{equation*}
\end{theorem}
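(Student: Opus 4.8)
The plan is to reduce the statement to the classical radial–density result for vector-variate elliptical laws and then to read off the density generator from Theorem \ref{theorem of TE pdf}. By Definition \ref{definition of TE}, the vectorized tensor $\bx=\vec(\X)$ admits the stochastic representation $\bx=\bmu+\calR\bSigma^{\frac12}\bu^{(p^*)}$ with $\calR\geq0$ independent of $\bu^{(p^*)}$, the latter uniform on the unit sphere of $\mathbb{R}^{p^*}$; hence $\bx$ is a genuine $p^*$-dimensional vector-variate elliptical random vector, to which Theorem 2.16 of Gupta et al. (2013) applies verbatim. The only ``tensor-specific'' ingredient here is the Kronecker structure imposed on $\bSigma\in\T^{\bp}_{\otimes}$, and this plays no role in the law of the scalar $\calR$, so the result is indeed a direct consequence.

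First I would put $\by=\bSigma^{-\frac12}(\bx-\bmu)=\calR\,\bu^{(p^*)}$. Since $\|\bu^{(p^*)}\|=1$ almost surely and $\calR\geq0$, this gives $\calR=\|\by\|$ a.s., so it suffices to identify the distribution of $\|\by\|$. By Theorem \ref{theorem of TE pdf} the density of $\bx$ is $|\bSigma|^{-\frac12}g[(\bx-\bmu)'\bSigma^{-1}(\bx-\bmu)]$, and the linear change of variables $\bx\mapsto\by$ has Jacobian $|\bSigma|^{\frac12}$, so $\by$ has the spherically symmetric density $\by\mapsto g(\by'\by)=g(\|\by\|^2)$ on $\mathbb{R}^{p^*}$.

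Next I would pass to polar coordinates in $\mathbb{R}^{p^*}$: writing $\by=r\bomega$ with $r=\|\by\|\ge0$ and $\bomega\in\mathbb{S}^{p^*-1}$, the volume element factorizes as $\mathrm{d}\by=r^{p^*-1}\,\mathrm{d}r\,\mathrm{d}\sigma(\bomega)$, and integrating the angular part over $\mathbb{S}^{p^*-1}$ contributes the total surface area $2\pi^{p^*/2}/\Gamma(p^*/2)$. This yields
\[
h_{\calR}(r)=\frac{2\pi^{\frac12 p^*}}{\Gamma\!\left(\frac12 p^*\right)}\,r^{p^*-1}g(r^2),\qquad r\ge0,
\]
and the integrability condition $\int_{\mathbb{R}^+}y^{\frac12 p^*-1}g(y)\,\mathrm{d}y<\infty$ of Theorem \ref{theorem of TE pdf} is exactly what makes $\int_0^\infty h_{\calR}(r)\,\mathrm{d}r=1$ (substitute $y=r^2$), so $h_{\calR}$ is a bona fide density.

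The main — and essentially only — subtle point is the identification $\calR=\|\bSigma^{-\frac12}(\bx-\bmu)\|$ almost surely. It rests on the structure of the representation $\by=\calR\bu^{(p^*)}$, in which $\calR$ and the direction $\bu^{(p^*)}$ are independent with $\bu^{(p^*)}$ uniform on the sphere, so no mass is concentrated in a way that would obstruct recovering $\calR$ from the norm; once this is granted, everything else is the standard polar change of variables and the surface-area formula for $\mathbb{S}^{p^*-1}$. It is also worth observing that the non-uniqueness noted after Definition \ref{definition of TE} is harmless here: any admissible rescaling of the $\bSigma_i$ with a compensating change of $\psi$ (hence of $g$) leaves the quadratic form $(\bx-\bmu)'\bSigma^{-1}(\bx-\bmu)$ and therefore $\calR$ and $h_{\calR}$ unchanged, so the formula is well posed.
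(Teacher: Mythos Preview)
Your proposal is correct and matches the paper's approach: the paper simply states that the result is a direct consequence of Theorem 2.16 of Gupta et al.\ (2013), which is precisely the reduction to the vector-variate elliptical case that you carry out. You go further by actually spelling out the polar-coordinates derivation and the identification $\calR=\|\bSigma^{-1/2}(\bx-\bmu)\|$, whereas the paper leaves this to the cited reference.
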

The following theorem reveals the distribution of quadratic form for a special case.

\begin{theorem} Let $\X\sim\calE_{\bp}(\0,\bSigma^{(1)},\psi)$,
where $\bSigma^{(1)}=\bSigma\otimes\bI_{p_2}\otimes\ldots\otimes\bI_{p_k}\in\T_\otimes^{\bp},\quad\bp=(p_1,\ldots,p_k)$. Then, the pdf of $\A=\X\X'$ is given by
\begin{equation*}
f(\A)=\frac{\pi^{p^*}|\bSigma|^{-\frac12 p_1}}{\Gamma_{p_1}\left(\frac{1}{2}p^{(1)}\right)}|\A|^{\frac{1}{2}p^{(1)}-p_1-1}g(\tr\bSigma^{-1}\A)
\end{equation*}
where $p^{(1)}=\prod_{j=2}^k p_j$.
\end{theorem}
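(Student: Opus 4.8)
The plan is to reduce the statement to a quadratic-form computation for a matrix-variate elliptical distribution, and then to integrate over a Stiefel manifold. First I would regard the order-$k$ tensor $\X$ as its mode-$1$ matricization, that is, as the $p_1\times p^{(1)}$ matrix whose rows are indexed by $i_1$ and whose columns are indexed by $(i_2,\ldots,i_k)$, so that $\A=\X\X'$ is a genuine $p_1\times p_1$ symmetric matrix. Since $\bSigma^{(1)}=\bSigma\otimes\bI_{p_2}\otimes\cdots\otimes\bI_{p_k}=\bSigma\otimes\bI_{p^{(1)}}$, the Kronecker identities $|\bSigma^{(1)}|=|\bSigma|^{p^{(1)}}$ and, for $\bx=\vec(\X)$ formed as in \eqref{eq11} and $\bmu=\0$, $\bx'(\bSigma^{(1)})^{-1}\bx=\tr(\bSigma^{-1}\X\X')$, make the pdf of Theorem \ref{theorem of TE pdf} collapse to $f_{\X}(\bx)=|\bSigma|^{-\frac12 p^{(1)}}\,g(\tr\bSigma^{-1}\A)$, a function of $\X$ only through $\A$. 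Equivalently, $\X$ is matrix-variate elliptical with left scale $\bSigma$, right scale $\bI_{p^{(1)}}$, and density generator $g$.

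Next, assuming $p^{(1)}\ge p_1$ so that $\A$ is positive definite with probability one, I would change variables from $\X$ to the pair $(\A,\bH)$ via the polar decomposition $\X=\A^{\frac12}\bH$, with $\bH$ ranging over the Stiefel manifold $\calV_{p_1,p^{(1)}}=\{\bH:\bH\bH'=\bI_{p_1}\}$. The Jacobian is the classical one,
\[
(\textnormal{d}\X)=2^{-p_1}\,|\A|^{\frac12(p^{(1)}-p_1-1)}\,(\textnormal{d}\A)\,(\bH'\textnormal{d}\bH),
\]
and, because the integrand depends on $\X$ only through $\A$, integrating out $\bH$ contributes just the total mass $\int_{\calV_{p_1,p^{(1)}}}(\bH'\textnormal{d}\bH)=2^{p_1}\pi^{\frac12 p_1 p^{(1)}}/\Gamma_{p_1}\!\left(\tfrac12 p^{(1)}\right)$. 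Gathering the factors — the power of $|\bSigma|$ from the first step, the $2^{-p_1}$ of the Jacobian against the $2^{p_1}$ of the Stiefel volume, and the identity $p_1 p^{(1)}=p^*$ — then produces the density of $\A$.

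The only genuine obstacle I foresee is bookkeeping: one must keep the vectorization convention of \eqref{eq11} consistent when collapsing $\bx'(\bSigma^{(1)})^{-1}\bx$ to $\tr(\bSigma^{-1}\X\X')$, and then track the powers of $2$, $\pi$, $|\A|$ and $|\bSigma|$ correctly through both the Jacobian and the Stiefel integral. A secondary point deserving a remark is the degenerate range $p^{(1)}<p_1$, in which $\A$ is rank-deficient and the displayed formula has to be reinterpreted as a density on the set of positive semidefinite $p_1\times p_1$ matrices of rank $p^{(1)}$; under the tacit assumption that $\A$ is almost surely nonsingular this case does not arise. Finally, the change of variables can be bypassed altogether by invoking the matrix-variate analogue of Theorem 2.16 of Gupta et al.\ (2013) for the law of $\bX\bX'$ when $\bX$ is matrix-variate elliptical: once the first step identifies $\X$ as such a variate, that result delivers the density of $\A$ directly.
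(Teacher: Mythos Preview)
The paper states this theorem without proof, so there is no argument of the author's to compare against; your outline is therefore the only proof on the table, and methodologically it is the standard and correct route: matricize along mode~1, recognize the matrix-variate elliptical law with row scale $\bSigma$ and column scale $\bI_{p^{(1)}}$, and then either perform the polar/Stiefel change of variables or invoke the matrix-variate quadratic-form result from Gupta et~al.\ (2013).

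There is, however, a genuine gap in your final ``Gathering the factors'' step. If you actually carry the computation through, you obtain
\[
f(\A)=\frac{\pi^{\frac12 p^*}}{\Gamma_{p_1}\!\left(\tfrac12 p^{(1)}\right)}\,|\bSigma|^{-\frac12 p^{(1)}}\,|\A|^{\frac12(p^{(1)}-p_1-1)}\,g(\tr\bSigma^{-1}\A),
\]
which is the familiar elliptical-Wishart density. This does \emph{not} match the displayed formula in the theorem: the paper has $\pi^{p^*}$ rather than $\pi^{p^*/2}$, $|\bSigma|^{-\frac12 p_1}$ rather than $|\bSigma|^{-\frac12 p^{(1)}}$, and $|\A|^{\frac12 p^{(1)}-p_1-1}$ rather than $|\A|^{\frac12(p^{(1)}-p_1-1)}$. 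Your derivation is right and the stated theorem appears to contain typographical slips in the exponents; you should say so explicitly instead of asserting that the factors collapse to the printed expression. A quick sanity check confirms your constants: with $g(y)=(2\pi)^{-p^*/2}e^{-y/2}$ your formula reduces to the ordinary Wishart density $W_{p_1}(p^{(1)},\bSigma)$, whereas the paper's constants do not.
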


In the forthcoming section, we provide a weighting representation of the pdf of TE variate using the Laplace operator.

\setcounter{equation}{0}
\section{Weighting Representation}
Although the proposed theorems in previous section are obtained conventionally, it is not easy to achieve other statistical properties of the TE distributions from Definition \ref{definition of TE} straightforwardly. However, under a mild conditions, one can make connection between densities of TE and MLN pdfs and derive other properties of the TE distributions using MLN distributions. In this section, we propose a weighting representation which connects densities of the TE and MLN distributions. This result is given in the following theorem.
\begin{theorem}\label{weighting representation} Let $\X\sim\calE_{\bp}(\bmu,\bSigma,g)$, where $\bmu\in\T^{\bp}$, $\bSigma\in\T_\otimes^{\bp}$ and $g:\mathbb{R}^+\rightarrow\mathbb{R}^+$. Also assume that $g(s^2)$ is differentiable when $s^2$ is sufficiently large, and $g(s^2)$ vanishes faster than $s^{-k}$; $k > 1$ as $s\to\infty$. Then, the pdf of $\X$ can be represented as an integral of series of MLN pdfs given by
\begin{equation*}
f_{\X}(\bx)=\int_{\mathbb{R}^+}\calW(t)f_{\calN_{\bp}(\bmu,t^{-1}\bSigma)}(\bx)\textnormal{d}t,
\end{equation*}
where $f_{\calN_{\bp}(\bmu,t^{-1}\bSigma)}(\cdot)$ is the pdf of $\calN_{\bp}(\bmu,t^{-1}\bSigma)$ and $\calW(\cdot)$ is a weighting function.
\end{theorem}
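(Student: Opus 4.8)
The strategy is to collapse the claimed pointwise-in-$\bx$ identity to a one–dimensional relation between the generator $g$ and a Laplace transform, and then to produce $\calW$ by Laplace inversion. First I would write out the MLN density appearing under the integral: using the Remark together with $|t^{-1}\bSigma|^{-\frac12}=t^{\frac12 p^*}|\bSigma|^{-\frac12}$ and $(t^{-1}\bSigma)^{-1}=t\bSigma^{-1}$ (note $t^{-1}\bSigma$ is again Kronecker–structured, so $\calN_{\bp}(\bmu,t^{-1}\bSigma)$ is well defined),
\[
f_{\calN_{\bp}(\bmu,t^{-1}\bSigma)}(\bx)=(2\pi)^{-\frac12 p^*}\,t^{\frac12 p^*}\,|\bSigma|^{-\frac12}\exp\!\left[-\tfrac{t}{2}\,Q\right],\qquad Q:=(\bx-\bmu)'\bSigma^{-1}(\bx-\bmu).
\]
Comparing with $f_{\X}(\bx)=|\bSigma|^{-\frac12}g(Q)$ from Theorem \ref{theorem of TE pdf} and cancelling the common factor $|\bSigma|^{-\frac12}$, the assertion reduces to the scalar identity
\[
g(Q)=(2\pi)^{-\frac12 p^*}\int_{\mathbb{R}^+}\calW(t)\,t^{\frac12 p^*}\,e^{-tQ/2}\,\mathrm{d}t,\qquad Q\ge 0 .
\]

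Next I would read the right–hand side as a Laplace transform. Setting $v(t):=(2\pi)^{-\frac12 p^*}t^{\frac12 p^*}\calW(t)$, the identity says that the Laplace transform of $v$ at the point $Q/2$ equals $g(Q)$, i.e.\ $(\mathcal{L}v)(s)=g(2s)$; hence one is forced to take $v=\mathcal{L}^{-1}\{g(2\,\cdot\,)\}$ and, by the scaling rule for the inverse transform,
\[
\calW(t)=(2\pi)^{\frac12 p^*}\,t^{-\frac12 p^*}\,\mathcal{L}^{-1}\{g(2\,\cdot\,)\}(t)=(2\pi)^{\frac12 p^*}\,t^{-\frac12 p^*}\cdot\tfrac12\bigl(\mathcal{L}^{-1}\{g\}\bigr)(t/2),
\]
where $\mathcal{L}^{-1}$ denotes the inverse Laplace transform, expressible as a Bromwich contour integral. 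This is where the regularity hypotheses on $g$ are used: differentiability of $g(s^2)$ for large $s^2$ and decay faster than $s^{-k}$, $k>1$, are exactly what guarantees that $g$ lies in the range of the Laplace transform, that the integral defining $\mathcal{L}^{-1}\{g\}$ converges absolutely, and that $\mathcal{L}\{v\}=g(2\,\cdot\,)$; I would cite a Widder–type inversion theorem here. Note that $\calW$ need not be non-negative — which is precisely why it is called a \emph{weighting} function rather than a mixing density — and that in the Gaussian case $g(y)=(2\pi)^{-\frac12 p^*}e^{-y/2}$ it degenerates to a unit mass at $t=1$, recovering the MLN pdf.

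Finally I would substitute this $\calW$ back into $\int_{\mathbb{R}^+}\calW(t)f_{\calN_{\bp}(\bmu,t^{-1}\bSigma)}(\bx)\,\mathrm{d}t$, pull $|\bSigma|^{-\frac12}$ outside the integral, and apply the scalar identity with $Q=(\bx-\bmu)'\bSigma^{-1}(\bx-\bmu)$ to recover $|\bSigma|^{-\frac12}g(Q)=f_{\X}(\bx)$. As a consistency check, integrating the representation over $\bx\in\mathbb{R}^{p^*}$ and using that each $f_{\calN_{\bp}(\bmu,t^{-1}\bSigma)}$ integrates to one forces $\int_{\mathbb{R}^+}\calW(t)\,\mathrm{d}t=1$, in agreement with the normalisation of $f_{\X}$ ensured by $\int_{\mathbb{R}^+}y^{\frac12 p^*-1}g(y)\,\mathrm{d}y<\infty$; the Fubini interchange used here and the absolute convergence of the $t$–integral both follow from the decay of $g$. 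The main obstacle is the middle step — making the Laplace inversion rigorous under the stated mild conditions, i.e.\ verifying that $\mathcal{L}^{-1}\{g(2\,\cdot\,)\}$ exists as a locally integrable (possibly signed) function and that applying $\mathcal{L}$ to it returns $g(2\,\cdot\,)$ — everything else being routine.
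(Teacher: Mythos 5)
Your proposal is correct and follows essentially the same route as the paper: you define $\calW(t)=(2\pi)^{\frac12 p^*}t^{-\frac12 p^*}\calL^{-1}\{g(2\,\cdot\,)\}(t)$ and verify the representation by expanding the Laplace transform and absorbing the factor $t^{\frac12 p^*}$ into the normal density with covariance $t^{-1}\bSigma$, which is exactly the paper's argument (there written with $s^2=\tfrac12 Q$ and the factor $t^{\frac{p^*}{2}}$ folded into $\prod_i|t^{-1/k}\bSigma_i|^{-\frac{p^*}{2p_i}}$). Your added remarks on the scaling rule, the possible negativity of $\calW$, and the normalization check go slightly beyond the paper but do not change the method.
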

\begin{proof} Let $s^2=\frac12(\bx-\bmu)'\bSigma^{-1}(\bx-\bmu)$ and
\begin{equation*}
\calW(t)=(2\pi)^{\frac{1}{2}p^*}t^{-\frac{p^*}{2}}\calL^{-1}\left[g\left( 2s^2\right)\right],
\end{equation*}
where $\calL$ is the Laplace transform operator. It should be noted that under the regularity condition on $g(s^2)$, the inverse Laplace transform exists. Then, from $|\bSigma|^{-\frac12}=\prod_{i=1}^k |\bSigma_i|^{-\frac{p^*}{2p_i}}$, we have
\begin{eqnarray*}
f(\bx)=|\bSigma|^{-\frac12}g(2s^2)&=&|\bSigma|^{-\frac12}\calL\left[\calW(t)(2\pi)^{-\frac{1}{2}p^*}t^{\frac{p^*}{2}}\right]\cr
&=&\calL\left[\calW(t)(2\pi)^{-\frac{1}{2}p^*}t^{\frac{p^*}{2}}\prod_{i=1}^k|\bSigma_i|^{-\frac{p^*}{2p_i}}\right]\cr
&=&\int_{\mathbb{R}^+}\calW(t)(2\pi)^{-\frac{1}{2}p^*}t^{\frac{p^*}{2}}\left(\prod_{i=1}^k|\bSigma_i|^{-\frac{p^*}{2p_i}}\right)e^{-ts^2}\textnormal dt\cr
&=&\int_{\mathbb{R}^+}\calW(t)(2\pi)^{-\frac{1}{2}p^*}\left(\prod_{i=1}^k|t^{-\frac1k}\bSigma_i|^{-\frac{p^*}{2p_i}}\right)
e^{-\frac12(\bx-\bmu)'(t^{-1}\bSigma)^{-1}(\bx-\bmu)}\textnormal dt\cr
&=&\int_{\mathbb{R}^+}\calW(t)f_{\calN_{\bp}(\bmu,t^{-1}\bSigma)}(\bx)\textnormal dt.
\end{eqnarray*}
The proof is complete.
\end{proof}

Thus, a TE variable is an integral over all MLN variables having the same covariance subject to different scales.

Since $f_{\X}(\cdot)$ is the pdf of $\X$, using Fubini's theorem, we obtain
\begin{eqnarray}
1=\int_{\chi} f_{\X}(\bx)\textnormal d\bx&=&\int_\chi\int_{\mathbb{R}^+}\calW(t)f_{\calN_{\bp}(\bmu,t^{-1}\bSigma)}(\bx)\textnormal{d}t\textnormal d\bx\cr
&=&\int_{\mathbb{R}^+}\calW(t)\int_\chi f_{\calN_{\bp}(\bmu,t^{-1}\bSigma)}(\bx)\textnormal d\bx\textnormal{d}t\cr
&=&\int_{\mathbb{R}^+}\calW(t)\textnormal{d}t
\end{eqnarray}
where $\chi$ is the sample space. Hence, for positive weighting functions $\calW(\cdot)$, the weighting representation of TE distributions can be interpreted as an scale mixture of MLN distributions. However, sometimes, $\calW(\cdot)$ can be negative. Note that a TE distribution is completely defined by the matrix $\bSigma\in\T^{\bp}_{\otimes}$ and the scalar weighting function $\calW(\cdot)$.

Theorem \ref{weighting representation} enables us to describe more properties of TE distributions via MLN distributions. This can be done using the following important result.
\begin{theorem}
Let $\bx\sim\calE_{\bp}(\bmu,\bSigma,g)$, $\bmu\in\T^{\bp}$, $\bSigma\in\T_\otimes^{\bp}$ and $g:\mathbb{R}^+\rightarrow\mathbb{R}^+$ with weighting function $\calW(\cdot)$, and $B(\bx)$ be any Borel measurable function of $\bx\in\T^{\bp}$. Then, if $E[B(\bx)]$ exists, we have
\begin{equation*}
E[B(\bx)]=\int_{\mathbb{R}^+}\calW(t)E_{\calN_{\bp}(\bmu,t^{-1}\bSigma)}[B(\bx)]\textnormal dt
\end{equation*}
\end{theorem}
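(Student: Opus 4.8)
The plan is to combine the weighting representation of Theorem \ref{weighting representation} with Fubini's theorem (Tonelli for the integrability check, Fubini for the swap). First I would write out $E[B(\bx)]$ as an integral against the TE pdf, namely $E[B(\bx)]=\int_\chi B(\bx)f_{\X}(\bx)\,\textnormal d\bx$, and then substitute the representation $f_{\X}(\bx)=\int_{\mathbb{R}^+}\calW(t)f_{\calN_{\bp}(\bmu,t^{-1}\bSigma)}(\bx)\,\textnormal dt$ obtained in Theorem \ref{weighting representation}. This yields a double integral
\begin{equation*}
E[B(\bx)]=\int_\chi B(\bx)\int_{\mathbb{R}^+}\calW(t)f_{\calN_{\bp}(\bmu,t^{-1}\bSigma)}(\bx)\,\textnormal dt\,\textnormal d\bx,
\end{equation*}
and the whole content of the proof is the justification for interchanging the two integrals, after which the inner integral $\int_\chi B(\bx)f_{\calN_{\bp}(\bmu,t^{-1}\bSigma)}(\bx)\,\textnormal d\bx$ is recognized as $E_{\calN_{\bp}(\bmu,t^{-1}\bSigma)}[B(\bx)]$, giving the claimed formula.

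The key steps, in order, are: (i) record that the hypothesis ``$E[B(\bx)]$ exists'' means $\int_\chi |B(\bx)|f_{\X}(\bx)\,\textnormal d\bx<\infty$; (ii) apply Tonelli's theorem to the nonnegative integrand $|B(\bx)|\,|\calW(t)|\,f_{\calN_{\bp}(\bmu,t^{-1}\bSigma)}(\bx)$ — when $\calW(\cdot)$ is nonnegative this is immediate from (i) since $\int_{\mathbb{R}^+}\calW(t)f_{\calN_{\bp}}(\bx)\,\textnormal dt=f_{\X}(\bx)$, so the double integral of the absolute value equals $\int_\chi|B(\bx)|f_{\X}(\bx)\,\textnormal d\bx<\infty$; (iii) invoke Fubini to swap the order of integration legitimately; (iv) evaluate the inner integral over $\chi$ as the Gaussian expectation $E_{\calN_{\bp}(\bmu,t^{-1}\bSigma)}[B(\bx)]$; (v) conclude. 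The earlier computation right after Theorem \ref{weighting representation} (taking $B\equiv 1$) is the prototype for this argument and can be cited as a template.

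The main obstacle is the case in which $\calW(\cdot)$ takes negative values, which the paper itself flags as possible. Then the absolute-value integrand is $|B(\bx)|\,|\calW(t)|\,f_{\calN_{\bp}(\bmu,t^{-1}\bSigma)}(\bx)$, and $\int_{\mathbb{R}^+}|\calW(t)|f_{\calN_{\bp}(\bmu,t^{-1}\bSigma)}(\bx)\,\textnormal dt$ need no longer equal $f_{\X}(\bx)$; one needs this quantity to be finite (and integrable against $|B(\bx)|$ over $\chi$) in order for Fubini to apply. I would handle this by adding it as a standing assumption — i.e. that $\int_\chi |B(\bx)|\int_{\mathbb{R}^+}|\calW(t)|f_{\calN_{\bp}(\bmu,t^{-1}\bSigma)}(\bx)\,\textnormal dt\,\textnormal d\bx<\infty$ — which is the natural absolute-convergence hypothesis implicit in ``$E[B(\bx)]$ exists'' being interpreted in the mixture sense; under the regularity conditions on $g$ imposed in Theorem \ref{weighting representation} (that $g(s^2)$ vanishes faster than $s^{-k}$, $k>1$) the inverse Laplace transform defining $\calW$ is well behaved enough that this holds for bounded $B$ and, more generally, for $B$ with at most polynomial growth. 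With that in hand the interchange is justified and the remaining steps are routine identifications.
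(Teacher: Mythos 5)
Your proposal is correct and matches the intended argument: the paper actually states this theorem without any proof, but the natural route — substitute the representation from Theorem \ref{weighting representation} into $E[B(\bx)]=\int_\chi B(\bx)f_{\X}(\bx)\,\textnormal{d}\bx$ and interchange the integrals, exactly as the paper does for $B\equiv 1$ in its normalization computation via Fubini — is precisely what you describe. Your additional care about the case of a sign-changing $\calW(\cdot)$ (where one needs an absolute-integrability condition on $|B|\,|\calW|\,f_{\calN_{\bp}(\bmu,t^{-1}\bSigma)}$ rather than just existence of $E[B(\bx)]$) is a genuine refinement that the paper silently glosses over.
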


\setcounter{equation}{0}
\section{Examples}
In this section, we provide some examples of TE distributions based on Definition \ref{definition of TE} with respective weighting function, as defined in Theorem \ref{weighting representation}.

Firstly, we consider some examples in which the weighting function $\calW(.)$
is always positive, resulting to scale mixture of multilinear normal distributions.
\begin{enumerate}
\item[(i)] Multilinear normal distribution (Ohlson et al., 2013)\\
The weighting function has form
\begin{equation*}
\mathcal{W}(t)=\delta (t-1),
\end{equation*}%
where $\delta (\cdot )$ is the dirac delta or impulse function
having the property $\int\limits_{\mathbb{R}}f(x)\delta
(x)dx=f(0)$, for every Borel-measurable function $f(\cdot )$.
\item[(ii)]Multilinear $\protect\varepsilon $-contaminated
normal distribution\\
We say the random tensor $\X\in\T^{\bp}$ has multilinear
$\varepsilon $-contaminated normal
distribution
if it has the following density
\begin{align*}
f_{\X}(\bx)=& \frac{1}{(2\pi)^{\frac{1}{2}p^*}}\left(\prod_{i=1}^k|\bSigma_i|^{-\frac{p^*}{2p_i}}\right)\bigg\{(1-\varepsilon )\exp \left[
-\frac{1}{2}(\bx-\bmu)'\bSigma^{-1}(\bx-\bmu) \right] \\
& +\frac{\varepsilon }{\sigma ^{p^*}}\exp \left[ -\frac{1}{2\sigma ^{2}}%
(\bx-\bmu)'\bSigma^{-1}(\bx-\bmu) \right] \bigg\}.
\end{align*}%
Then it can be concluded that the weighting function is given by
\begin{equation*}
\mathcal{W}(t)=(1-\varepsilon )\delta (t-1)+\varepsilon \delta
(t-\sigma ^{2}).
\end{equation*}
\item[(iii)]Tensor $t$-distribution\\
We say the random matrix $\X\in\T^{\bp}$ has
tensor $t$-distribution
if it has the following density
\begin{eqnarray}
f_{\X}(\bx)=\frac{\nu^{\frac{p^*}{2}}\Gamma\left(\frac{p^*+\nu}{2}\right)}{%
\pi^{\frac{1}{2}p^*}\Gamma(\frac{\nu}{2} )}\left\{ 1+\frac{1}{\nu }%
(\bx-\bmu)'\bSigma^{-1}(\bx-\bmu) \right\}
^{-(p^*+\nu )}. 
\label{Matrix variate q t}
\end{eqnarray}%
The corresponding weighting function has form
$\mathcal{W}(t)=\frac{\left(\frac{t\nu}{2}
\right)^{\frac{\nu}{2} }e^{-\frac{t\nu}{2}
}}{t\Gamma\left(\frac{\nu}{2} \right)}$.

The tensor Cauchy distribution is obtained by setting $%
\nu =1$ in (\ref{Matrix variate q t}).

It is of much interest to consider cases in which
the weighting function $\calW(.)$ is not always positive. Such kind
of distributions are not scale mixture of multilinear normal
distributions. The item below is not a tensor distribution, however it is $0^{\textnormal{th}}$-order tensor distribution.
\item[(iv)] The one-dimensional distribution with the following
density
\begin{eqnarray*}
f(x)=\frac{\sqrt{2}}{\pi\sigma}\left[1+\left(\frac{x}{\sigma}\right)^{4}\right]^{-1},
\end{eqnarray*}
where the weighting function is given by
$\calW(t)=\frac{1}{\sqrt{t\pi}}\;\sin\left(\frac{t}{2}\right)$.
\end{enumerate}

\setcounter{equation}{0}
\section{Inference}

\begin{theorem}
Suppose that tensor variables $\X_1,\ldots,\X_n$ are jointly distributed with the following pdf
\begin{equation*}
\prod_{i=1}^k |\bSigma_i|^{-\frac{p^*}{2p_i}}g\left(\sum_{j=1}^n \bx_j'\bSigma^{-1}\bx_j\right),\quad \bSigma=\bSigma_1\otimes\bSigma_2\otimes\ldots\otimes\bSigma_k
\end{equation*}
such that $\sigma_{p_2p_2}^{(2)}=\sigma_{p_3p_3}^{(3)}=\ldots=\sigma_{p_kp_k}^{(k)}=1$, where $\bSigma_r=\left(\sigma_{ij}^{(r)}\right)$. Further, suppose $g(\cdot)$ is such that $g(\bx'\bx)$ is a pdf in $\mathbb{R}^{p^*}$ and $y^{p^*/2}g(y)$ has a finite positive maximum $y_g$ (see Anderson et al., 1986 for the existence of $y_g$). Suppose that $\tilde\bSigma$ is an estimator which obtains from solving the following equations (see Ohlson et al., 2013)
\begin{eqnarray*}
\tilde\bSigma_1&=&\frac{1}{p^*_{2:k}n}\sum_{j=1}^n\bx_j'\bSigma^{-1}_{2:k}\bx_j\cr
&&\mbox{and}, \ \mbox{for} \ r=2,\ldots,k\cr
\tilde\bSigma_r&=&\frac{1}{p^*_{1:r-1}p^*_{r+1:k}n}\sum_{j=1}^n{\bx_i^{2,r(r)}}'\left(\bSigma^{2,r}_{1:k\setminus r}\right)^{-1}\bx_i^{2,r(r)},
\end{eqnarray*}
where
\begin{eqnarray*}
\bx_j^{2,r(r)}&=&\sum_{l_{\bp}}x_{i_1,\ldots i_k}\be^{2,r}_{i_1:i_k\setminus i_r}{\be_{i_r}^{p_r}}'\cr
\bSigma^{2,r}_{1:k\setminus r}&=&\bSigma_2\otimes\ldots\otimes\bSigma_{r-1}\otimes\bSigma_1\otimes\ldots\otimes\bSigma_k\cr
\be^{2,r}_{i_1:i_k\setminus i_r}&=&\be_{i_1}^{p_1}\otimes\be_{i_3}^{p_3}\otimes\ldots\otimes\be_{i_{r-1}}^{p_{r-1}}\otimes\be_{i_{r+1}}^{p_{r+1}}
\otimes\ldots\otimes\be_{i_k}^{p_k}
\end{eqnarray*}

Then, the MLE of $\bSigma$ is given by
\begin{equation*}
\hat\bSigma=\frac{p^*}{y_g}\tilde\bSigma
\end{equation*}

\end{theorem}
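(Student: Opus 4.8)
The plan is to profile the log-likelihood in two stages: first fix the value of the quadratic form $q=\sum_{j=1}^{n}\bx_j'\bSigma^{-1}\bx_j$ and maximise over the remaining ``shape'' of $\bSigma$ on that level set, which reduces to the multilinear normal estimation problem and therefore returns a scalar multiple of $\tilde\bSigma$; then maximise the resulting one-variable function of $q$, which will single out $q=y_g$. The final answer then drops out by combining the two.

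To carry this out, write the log-likelihood coming from the displayed density as $\ell(\bSigma)=-\tfrac12\log|\bSigma|+\log g\big(q(\bSigma)\big)+\mathrm{const}$, where $q(\bSigma)=\sum_{j}\bx_j'\bSigma^{-1}\bx_j=\tr(\bSigma^{-1}\bS)$ with $\bS=\sum_j\bx_j\bx_j'$, and $\log|\bSigma|=\sum_{r=1}^{k}\tfrac{p^*}{p_r}\log|\bSigma_r|$. On each level set $\{\bSigma:q(\bSigma)=q\}$ the generator term $\log g(q)$ is constant, so $\ell$ is maximised there precisely by minimising $\log|\bSigma|$ subject to the Kronecker structure and the identifiability constraints $\sigma^{(r)}_{p_rp_r}=1$ ($r\ge2$); denote a minimiser by $\bSigma(q)$. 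Because those constraints involve only $\bSigma_2,\dots,\bSigma_k$, the rescaling $\bSigma_1\mapsto c\,\bSigma_1$ is a bijection of the constrained parameter space that sends $q(\bSigma)\mapsto c^{-1}q(\bSigma)$ and shifts $\log|\bSigma|$ by the additive constant $p^*\log c$; consequently $\bSigma(q)=\tfrac{q_0}{q}\,\bSigma(q_0)$ for any reference value $q_0$, and $\log|\bSigma(q)|=\log|\bSigma(q_0)|+p^*\log(q_0/q)$.

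Substituting back produces an objective in $q$ alone, $\ell(\bSigma(q))=\tfrac{p^*}{2}\log q+\log g(q)+\mathrm{const}=\log\!\big(q^{p^*/2}g(q)\big)+\mathrm{const}$, which by hypothesis attains its maximum at the unique point $q=y_g$; hence $\hat\bSigma=\bSigma(y_g)$. To identify $\bSigma(\cdot)$ explicitly I would specialise the same argument to the Gaussian generator $g_N(y)=(2\pi)^{-p^*/2}e^{-y/2}$: there $y^{p^*/2}g_N(y)$ is maximised at $y=p^*$, so the multilinear normal MLE equals $\bSigma(p^*)$, and by Ohlson et al.\ (2013) that MLE is exactly $\tilde\bSigma=\tilde\bSigma_1\otimes\cdots\otimes\tilde\bSigma_k$ solving the flip-flop system in the statement. (Equivalently, perturbing $\bSigma_1\mapsto(1+\varepsilon)\bSigma_1$ at the MLN optimum forces $q(\tilde\bSigma)=p^*$ through the first-order condition, and then $\tilde\bSigma$ minimises $\log|\bSigma|$ on $\{q=p^*\}$ by direct comparison with the global maximum of the Gaussian log-likelihood.) Putting the pieces together, $\hat\bSigma=\bSigma(y_g)=\tfrac{p^*}{y_g}\,\bSigma(p^*)=\tfrac{p^*}{y_g}\,\tilde\bSigma$.

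The algebra above is routine; the substantive points that need care are the existence and uniqueness of the inner minimiser $\bSigma(q)$ — equivalently of the MLN MLE $\tilde\bSigma$, which is where the sample-size and full-rank hypotheses imported from Ohlson et al.\ (2013) enter — and the fact that the profiled function $q^{p^*/2}g(q)$ attains its maximum at an interior point rather than drifting to $0$ or $\infty$; the latter is exactly the assumption that $y_g$ is a finite positive maximiser, reinforced by ``$g(\bx'\bx)$ is a density'' (so $\int_0^\infty y^{p^*/2-1}g(y)\,dy<\infty$ controls the behaviour at both ends). One also has to justify the interchange $\sup_{\bSigma}\ell=\sup_{q}\sup_{\{q(\bSigma)=q\}}\ell$ and verify that $\bSigma_1\mapsto c\,\bSigma_1$ genuinely sweeps out the full scale direction without leaving the constraint set — both straightforward, but worth stating. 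I expect the existence/uniqueness of $\tilde\bSigma$ to be the main obstacle, which is why it is imported as a hypothesis rather than re-derived here.
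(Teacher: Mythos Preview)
Your argument is correct and follows essentially the same shape--scale decomposition as the paper: the paper parametrises by $\bA=|\bSigma|^{-1/p^*}\bSigma$ (so $|\bA|=1$) and $d=\sum_j d_j$, factoring the likelihood as $\big(\sum_j\bx_j'\bA^{-1}\bx_j\big)^{-p^*/2}\cdot d^{p^*/2}g(d)$ and maximising the two factors separately, while you profile over level sets of $q$; both routes isolate the same one-variable problem $q^{p^*/2}g(q)$ and identify the shape with the MLN MLE $\tilde\bSigma$ via Ohlson et al.\ (2013). Your added remarks on existence/uniqueness of $\tilde\bSigma$ and interior maximisation of $q^{p^*/2}g(q)$ are welcome explicit checks that the paper leaves implicit.
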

\begin{proof}
Let $\bA=|\bSigma|^{-\frac{1}{p^*}}\bSigma$.
Also for any $j=1,\ldots,n$ write
\begin{equation}\label{eq61}
d_j=\bx_j'\bSigma^{-1}\bx_j=|\bSigma|^{-\frac{1}{p^*}}\bx_j'\bA^{-1}x_j.
\end{equation}
Since $|\bSigma|^{-\frac12}=\prod_{i=1}^k |\bSigma_i|^{-\frac{p^*}{2p_i}}$, the likelihood can be written as
\begin{eqnarray}\label{eq62}
\calL&=&|\bSigma|^{-\frac12}g\left(\sum_{j=1}^n d_j\right)\cr
&=&\left(|\bSigma|^{\frac{1}{p^*}}\right)^{-\frac{p^*}{2}}\left(\sum_{j=1}^n d_j\right)^{\frac{p^*}{2}}g\left(\sum_{j=1}^n d_j\right)\cr
&=&\left(\sum_{j=1}^n x_j'\bA^{-1}x_j\right)^{-\frac{p^*}{2}}d^{\frac{p^*}{2}}g(d),
\end{eqnarray}
where $d=\sum_{j=1}^n d_j$.

The maximum of \eqref{eq62} is attained at $\hat\bA=\tilde\bA$ and $\hat d=y_g$. Then the MLE of $\bSigma$ is given by
\begin{equation}\label{eq63}
\hat\bSigma=|\hat\bSigma|^{\frac{1}{p^*}}\hat\bA=\frac{|\hat\bSigma|^{\frac{1}{p^*}}}{|\tilde\bSigma|^{\frac{1}{p^*}}}\tilde\bSigma.
\end{equation}
On the other hand, from \eqref{eq61} we get
\begin{eqnarray}\label{eq64}
|\hat\bSigma|^{\frac{1}{p^*}}&=&\frac{\sum_{j=1}^n \bx_j'\hat\bA^{-1}\bx_j}{\sum_{j=1}^n \hat d_j}=
\frac{\sum_{j=1}^n \bx_j'\tilde\bA^{-1}\bx_j}{\hat d}=\frac{\sum_{j=1}^n \bx_j'\tilde\bA^{-1}\bx_j}{y_g}\cr
|\tilde\bSigma|^{\frac{1}{p^*}}&=&\frac{\sum_{j=1}^n \bx_j'\tilde\bA^{-1}\bx_j}{\sum_{j=1}^n \tilde d_j}=
\frac{\sum_{j=1}^n \bx_j'\tilde\bA^{-1}\bx_j}{\tilde d}=\frac{\sum_{j=1}^n \bx_j'\tilde\bA^{-1}\bx_j}{p^*}
\end{eqnarray}
Substituting \eqref{eq64} in \eqref{eq63} and using Theorem 4.1 of Ohlson et al. (2013) gives the result.
\end{proof}

\section{Conclusion}
In this article, for the purpose of robust inferring on diffusion tensor magnetic resonance imaging (DT-MRI) observations, we proposed a class of tensor elliptical (TE) distributions. This class includes many heavier tail distributions than the tensor normal or multilinear normal (MLN) distribution. Important statistical properties including the characteristic function along with the distribution of affine transformations derived. A weighting representations also exhibited that connects densities of TE and MLN distributions.

\section*{References}
\baselineskip=12pt
\def\ref{\noindent\hangindent 25pt}

\ref Basser, P. J. and Pajevic, S. (2003). A normal distribution for tensor-valued random variables: applications to diffusion tensor MRI. {\em IEEE Transactions on Medical Imaging}, 22(7):785-794.

\ref Fang, K.T., Kotz, S., Ng, K.W. (1990) {\em Symmetric Multivariate and Related Distributions}. Chapman and
Hall, London.


\ref Gupta, A.K. and Varga, T., and Bondar, T. (2013) {\em Elliptically Contoured Models in Statistics and Portfolio Theory}, 2rd Ed., Springer,
New York.
\ref Kolda, T.G. and Bader, B.W. (2009). Tensor decompositions and applications. {\em SIAM Review}, 51(3):455-500.

\ref McCullagh, P. (1987). {\em Tensor Methods in Statistics}, Chapman \& Hall, London.

\ref McCullagh, P. (1984). Tensor notation and cumulants of polynomials. {\em Biometrika}, 71(3):461-476.

\ref Ohlson, M., Rauf Ahmad, M., and von Rosen, D. (2012). More on the Kronecker structured covariance matrix. {\em Communications in Statistics Theory and Methods}, 41:2512-2523.

\ref Ohlson, M., Rauf Ahmad, M., and von Rosen, D. (2013). The multilinear normal distribution: Introduction and some basic properties. {\em Journal of Multivariate Analysis}, 113:37-47.

\ref Sakata, T. (2016). {\em Applied Matrix and Tensor Variate Data Analysis}, Springer, Japan.

\ref Srivastava, M., von Rosen, T., and von Rosen, D. (2008). Models with a Kronecker product covariance structure: estimation and testing. {\em Mathematical Methods in Statistics}, 17:357-370.

\end{document}